\newcommand{\Set}{{\bf Set}}
\newcommand{\Vect}{{\bf Vect}}
\newcommand{\too}{\longrightarrow}
\newcommand{\oto}[1]{\stackrel{\underrightarrow{\stackrel{#1}{\,\,\,\,\hphantom{\too}}}}{}}  
\theoremstyle{definition}
\newtheorem{prop}{Proposition}
\author{\sc Brian Day \footnote{Department of Mathematics, Macquarie University, NSW 2109, Australia.} }
\date{September 3, 2009}
\title{Note on the Fusion Map and Hopf Algebras}
\begin{document}

\maketitle

\begin{abstract}
We discuss an inversion property of the fusion map associated to many semibialgebras. A brief remark on Tannaka-type reconstruction for
VN-bialgebras has been added.
\end{abstract}

\vspace{1cm}

Let $\mathcal C = (\mathcal C, \tens, I, c)$ be a symmetric (or just braided) monoidal category. A Von Neumann ``core'' in $\mathcal C$ is firstly
a semibialgebra in $\mathcal C$, that is, an object $A$ in $\mathcal C$ with an associative multiplication: \[ \mu : A \tens A \too A \]
$(\mu_3 = \mu(1 \tens \mu) = \mu(\mu \tens 1) : A \tens A \tens A \too A)$ and a coassociative comultiplication: \[ \delta: A \too A \tens A \]
$(\delta_3 = (1 \tens \delta)\delta = (\delta \tens 1)\delta : A \too A \tens A \tens A )$  such that: \[ \delta\mu = (\mu \tens \mu)(1 \tens c \tens 1)
(\delta \tens \delta) : A \tens A \too A \tens A \] \noindent It is also equipped with an endomorphism \[ S: A \too A \] in $\mathcal C$ such that:
\[ \mu_3(1 \tens S \tens 1)\delta_3 = 1: A \too A \]

The name ``Von Neumann core'' stems partly from the notion of a Von Neumann regular semigroup, which is then precisely a VN-core in $\Set$, while
the free vector space on it is a particular type of VN-core in $\Vect$, and partly from the properties of the paths which generate a (row-finite) 
graph algebra\cite{Raeburn}.

The fusion map \[ f = (1 \tens \mu)(\delta \tens 1) : A \tens A \too A \tens A \] then satisfies the fusion equation by the semibialgebra axiom of
$A$ (see \cite{Street}), and if we set: \[ g = (1 \tens \mu)(1 \tens S \tens 1)(\delta \tens 1) \] as a tentative ``inverse'' to $f$, then we get
the following (partial) results:
\begin{prop} \cite{McCurdy} $fgf = f$ for any VN-core.\end{prop}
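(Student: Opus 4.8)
The plan is to compute the composite $fgf$ directly and reduce it to $f$ using only associativity of $\mu$, coassociativity of $\delta$, and the defining VN-axiom $\mu_3(1 \tens S \tens 1)\delta_3 = 1$. I expect the braiding $c$ and the bialgebra compatibility to play no role here, since neither $f$ nor $g$ involves $c$ and the Sweedler-style bookkeeping never calls on $\delta\mu = (\mu \tens \mu)(1 \tens c \tens 1)(\delta \tens \delta)$. Throughout I would freely use the interchange (bifunctoriality) law of $\tens$ to slide a $\delta$ acting on one factor past a $\mu$ acting on disjoint factors.

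First I would put $gf$ into a normal form. Since $f = (1 \tens \mu)(\delta \tens 1)$, the interchange law gives $(\delta \tens 1)(1 \tens \mu) = \delta \tens \mu$, and together with coassociativity $(\delta \tens 1)\delta = \delta_3$ this yields $(\delta \tens 1)f = (1 \tens 1 \tens \mu)(\delta_3 \tens 1): A \tens A \too A^{\tens 3}$. Post-composing with $(1 \tens \mu)(1 \tens S \tens 1)$ and collapsing the two resulting multiplications into a single $\mu_3$ by associativity, I obtain the clean form
\[ gf = (1 \tens \mu_3)(1 \tens S \tens 1 \tens 1)(\delta_3 \tens 1): A \tens A \too A \tens A. \]

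Next I would apply the outer $f$. Comultiplying the first tensor factor of the codomain of $gf$ and invoking coassociativity once more (now in the guise $(\delta \tens 1)\delta_3 = \delta_4$, applied to the leading factor) regroups the leading comultiplications so that the \emph{second} primary component of $\delta$ carries an internal $\delta_3$; associativity then packages the three surviving multiplications surrounding $S$ as a $\mu_3$ acting on precisely those three legs. Concretely, $fgf$ acquires, on that second primary component, the subexpression $\mu_3(1 \tens S \tens 1)\delta_3$, with the remaining data being the untouched first leg and a final multiplication by the second input. The VN-axiom collapses this subexpression to the identity, leaving exactly $(1 \tens \mu)(\delta \tens 1) = f$.

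The only real obstacle is the bookkeeping in the last step: one must arrange the coassociativity and associativity regroupings so that $S$ ends up flanked by the $\delta_3$ and $\mu_3$ belonging to the \emph{same} copy of $A$ (the second primary component), rather than straddling the input line or the first leg. In string-diagram form this is transparent—the absence of any braiding means the wires can be slid into the $\mu_3(1 \tens S \tens 1)\delta_3$ shape without crossing—so I would either present the argument diagrammatically or record the two intermediate normal forms above and verify the final regrouping factor by factor.
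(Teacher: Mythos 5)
Your computation is correct, and it is worth recording that it succeeds as a different route from the paper's. The paper does not expand $fgf$ by hand: it defines the left Fourier transform $l(\alpha) = (1 \tens \mu)(1 \tens \alpha \tens 1)(\delta \tens 1)$ of a map $\alpha : A \too B$, observes that $l$ turns convolution into composition, $l(\alpha \star \beta) = l(\alpha)l(\beta)$, recognises $f = l(1)$ and $g = l(S)$, and concludes in one line: $fgf = l(1)l(S)l(1) = l(1 \star S \star 1) = l(1) = f$, since the VN-core axiom $\mu_3(1 \tens S \tens 1)\delta_3 = 1$ says precisely $1 \star S \star 1 = 1$. Your direct argument is the same string-diagram manipulation with the packaging removed: your intermediate normal form $gf = (1 \tens \mu_3)(1 \tens S \tens 1 \tens 1)(\delta_3 \tens 1)$ and the final regrouping via $\delta_4 = (1 \tens \delta_3)\delta$ and $\mu_4 = \mu(\mu_3 \tens 1)$ both check out, and your observation that only interchange, (co)associativity, and the VN axiom are used is accurate --- it applies equally to the paper's proof, since the multiplicativity of $l$ (which the paper asserts without proof) is established by exactly the moves you perform, and the semibialgebra compatibility enters only for the fusion equation itself, not for this proposition. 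What the paper's packaging buys is reusability: the same lemma gives $gfg = l(S \star 1 \star S)$ for Proposition 2 and, in the very weak Hopf algebra discussion, converts $1 \star t = 1$ and $S \star 1 \star S = S$ directly into $fgf = f$ and $gfg = g$. What your version buys is self-containedness and an explicit certificate of the minimal hypotheses; if you keep it, I would suggest writing out the factor-by-factor verification of the last step (or the string diagram), since that regrouping --- ensuring $S$ is flanked by the $\delta_3$ and $\mu_3$ of the second primary component --- is exactly the point where a verbal description could hide an error, though yours does not.
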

\begin{proof}
Define the (left) Fourier transform $l(\alpha)$ of a map $\alpha : A \too B$ to be the composite
\[ A \tens B \oto{\delta \tens 1} A \tens A \tens B \oto{1 \tens \alpha \tens 1} A \tens B \tens B \oto{1 \tens \mu} A \tens B \]
Then $l(\alpha \star \beta) = l(\alpha)l(\beta)$, where $\star$ is the convolution $\alpha \star \beta = \mu(\alpha \tens \beta)\delta$ of two
maps $\alpha$ and $\beta$ from $A$ to $B$.
Thus:\begin{align*}
	fgf 	&= l(1)l(S)l(1) \\
		&= l(1 \star S \star 1) \\
		&= l(1) \\
		&= f
\end{align*} since $1 \star S \star 1 = 1$ by the definition of $VN$-core.
\end{proof}

\begin{prop} $gfg = g$ if $S^2 = 1$ and $S$ is an antihomomorphism either of algebras or of coalgebras.\end{prop} The proof is straightforward.

Recall that a $VN$-core is called ``unital''\cite{DP} if it satisfies the (stronger) axiom 
\[ 1 \tens \eta = (1 \tens \mu)(1 \tens S \tens 1)\delta_3 : A \too A \tens A \]
where $A$ is assumed to have the unit $\eta : I \too A$. (A unital VN-core in $\mathcal C = \Set$ is precisely a group).

\begin{prop}\cite{DP} $gf = 1$ for any unital VN-core.\end{prop}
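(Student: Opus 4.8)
The plan is to recycle the convolution calculus introduced in the proof of Proposition 1. The first observation is that $f$ and $g$ are themselves Fourier transforms: taking $B = A$ one has $f = l(1)$ and $g = l(S)$. Hence the homomorphism property $l(\alpha)l(\beta) = l(\alpha \star \beta)$ applies directly and gives
\[ gf = l(S)l(1) = l(S \star 1), \qquad S \star 1 = \mu(S \tens 1)\delta. \]
The entire proposition is thereby reduced to proving that the single endomorphism $l(S \star 1)$ of $A \tens A$ is the identity; note that this is exactly the place where the strengthened (unital) axiom must enter, since Proposition 1 alone only delivers $fgf = f$.

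Next I would unfold $l(S \star 1)$ and substitute $S \star 1 = \mu(S \tens 1)\delta$ into its middle factor. Functoriality of $\tens$ lets me expand $1 \tens (S \star 1) \tens 1$ into $(1 \tens \mu \tens 1)(1 \tens S \tens 1 \tens 1)(1 \tens \delta \tens 1)$, and coassociativity packages the resulting pair of comultiplications via $(1 \tens \delta \tens 1)(\delta \tens 1) = \delta_3 \tens 1$. This yields
\[ l(S \star 1) = (1 \tens \mu)(1 \tens \mu \tens 1)(1 \tens S \tens 1 \tens 1)(\delta_3 \tens 1). \]
A second application of functoriality (pulling the extra identity factor outside the composite) rewrites the right-hand side as
\[ l(S \star 1) = (1 \tens \mu)\bigl(\,[(1 \tens \mu)(1 \tens S \tens 1)\delta_3] \tens 1\,\bigr), \]
in which the bracketed composite is precisely the right-hand side of the unital axiom.

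The payoff is then immediate: substituting the unital axiom $(1 \tens \mu)(1 \tens S \tens 1)\delta_3 = 1 \tens \eta$ collapses the bracket, giving $l(S \star 1) = (1 \tens \mu)\bigl((1 \tens \eta) \tens 1\bigr) = 1 \tens \mu(\eta \tens 1)$, and the left unit law $\mu(\eta \tens 1) = 1$ finishes the argument with $l(S \star 1) = 1$, hence $gf = 1$.

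I expect the only real obstacle to be bookkeeping rather than anything conceptual: one must track the passive tensor factor (the second copy of $A$) carefully through the unfolding, so that the unital axiom---which is an identity of maps landing in $A \tens A$---can be applied to the first factor while the final $\mu$ absorbs the newly produced unit against the passive factor. Because no instance of the braiding $c$ occurs in $f$, $g$, or the unital axiom, the whole computation is planar and uses nothing beyond the Fourier homomorphism property, functoriality of $\tens$, coassociativity, and the unit law; there is no coherence subtlety to worry about.
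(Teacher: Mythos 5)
Your proof is correct: with $B = A$ one indeed has $f = l(1)$ and $g = l(S)$, the homomorphism property gives $gf = l(S \star 1)$, and your unfolding of $l(S \star 1)$ via functoriality, coassociativity ($(1 \tens \delta \tens 1)(\delta \tens 1) = \delta_3 \tens 1$), the unital axiom, and the left unit law is sound at every step. The paper itself gives no proof of this proposition (it defers to \cite{DP}), but your argument is exactly the natural continuation of the Fourier-transform calculus set up in the paper's proof of Proposition 1, so there is nothing to flag.
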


Note that, in general, if for a map $f$ there exists a map $g$ with $fgf = f$, then we can always find a map $h$ with $fhf = f$ and $hfh = h$ provided
idempotents split in $\mathcal C$.

A semibialgebra is called a very weak bialgebra in \cite{DP} if it also has both a unit $\eta : I \too A$ ($\mu(1 \tens \eta) = \mu(\eta \tens 1) = 1$)
and a counit $\epsilon : A \too I$ ($(1 \tens \epsilon)\delta = (\epsilon \tens 1)\delta = 1$). A very weak bialgebra $A$ is then called a very
weak Hopf algebra if it is equipped with a map $S: A \too A$ satisfying the axioms:
\begin{align*}
	\mu(S \tens 1)\delta &= t := (1 \tens \epsilon \mu)(c \tens 1)(1 \tens \delta\eta) \\
	\mu(1 \tens S)\delta &= r := (\epsilon \mu \tens 1)(1 \tens c^{-1})(\delta \eta \tens 1) \\
	\mu_3(S \tens 1 \tens S)\delta_3 &= S
\end{align*}

Hence $S \star 1 \star S = S$ so that $gfg = g$ and, as a consequence of the semibialgebra axiom, we have $1 \star t = 1$ (see \cite{PS}) whence
$1 \star S \star 1 = 1$ so that $fgf = f$ (using $S \star 1 = t$ by the first axiom).

{\sc Example}: Suppose that $(A,\mu,\delta,\eta,\epsilon)$ is a bialgebra for which $\delta$ is \emph{not} known to be coassociative, and
suppose that $A$ is also equipped with a map $S: A\too A$ (not necessarily an antihomomorphism), and invertible elements $\lambda : I \too A$ and
$\rho : I \too A$ such that the standard Drinfel'd axioms hold, namely: \begin{align*}
	\mu_3(S \tens \lambda \tens 1)\delta &= \lambda\epsilon \\
	\mu_3(1 \tens \rho \tens S)\delta &= \rho\epsilon
\end{align*}

\begin{prop} This is a quasi-VN-bialgebra in the sense that both the equations \[ \mu_3(1 \tens S \tens 1)(\delta \tens 1)\delta = 1 \]
and \[ \mu_3(1 \tens S \tens 1)(1 \tens \delta)\delta = 1 \] hold.\end{prop}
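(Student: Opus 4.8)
The plan is to read both equations as triple convolutions and collapse each against the convolution unit $\eta\epsilon$. Writing $\phi\star\psi = \mu(\phi\tens\psi)\delta$ for the convolution of maps $A \too A$ (as in the proof of Proposition 1) and using $\mu_3 = \mu(\mu\tens1) = \mu(1\tens\mu)$, one checks directly that the first composite $\mu_3(1\tens S\tens1)(\delta\tens1)\delta$ is the left-associated product $(1\star S)\star1$, while the second $\mu_3(1\tens S\tens1)(1\tens\delta)\delta$ is the right-associated product $1\star(S\star1)$. Since $\delta$ is not assumed coassociative, $\star$ is not associative, so these two products genuinely differ --- which is exactly why there are \emph{two} statements to prove --- and each should be matched to one of the two (left/right) Drinfel'd axioms.

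In generalised-element notation the axioms read $S(x_1)\alpha x_2 = \alpha\epsilon(x)$ and $x_1\beta S(x_2) = \beta\epsilon(x)$. From the first I would aim to extract the one-sided antipode equation $S\star1 = \eta\epsilon$ (that is, $\mu(S\tens1)\delta = \eta\epsilon$) and from the second the equation $1\star S = \eta\epsilon$, using that $\alpha$ and $\beta$ are invertible. Granting these two relations, the conclusions then fall out by a single collapse each and need no coassociativity: $1\star(S\star1) = 1\star(\eta\epsilon) = (1\tens\epsilon)\delta = 1$ and $(1\star S)\star1 = (\eta\epsilon)\star1 = (\epsilon\tens1)\delta = 1$, where the first equalities use only the unit law $\mu(1\tens\eta) = \mu(\eta\tens1) = 1$ and the second ones the counit law $(1\tens\epsilon)\delta = (\epsilon\tens1)\delta = 1$. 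Note that each collapse kills just one of the two convolutions (the inner one for $1\star(S\star1)$, the outer one for $(1\star S)\star1$), so the failure of coassociativity is harmless here provided each target is paired with the correctly-handed antipode relation.

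The step I expect to be the obstacle is precisely that extraction of the two antipode equations from the axioms. Cancelling $\alpha$ in $S(x_1)\alpha x_2 = \alpha\epsilon(x)$ is not the removal of an outside factor, since $\alpha$ sits between $S(x_1)$ and $x_2$ inside the convolution. What the axiom delivers cleanly is only that the conjugate $a\mapsto\alpha^{-1}S(a)\alpha$ is a left antipode and, symmetrically, that $a\mapsto\beta S(a)\beta^{-1}$ is a right antipode. Reducing these conjugated antipodes to antipode equations for $S$ itself --- equivalently, showing that the inserted invertible elements telescope against the unit and counit --- is the crux, and I would expect to need the multiplicativity of $\delta$ (the semibialgebra axiom $\delta\mu = (\mu\tens\mu)(1\tens c\tens1)(\delta\tens\delta)$) applied to $\alpha$, $\beta$ and their inverses, with both axioms brought to bear at once. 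This is the single point at which the invertibility of $\alpha$ and $\beta$ is genuinely used, and it is where I would budget essentially all of the care.
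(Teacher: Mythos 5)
Your reduction is right, and so is your bookkeeping: the two composites are exactly $(1 \star S)\star 1$ and $1 \star (S \star 1)$, each final collapse against $\eta\epsilon$ uses only the unit and counit laws and no coassociativity, and you have paired each expression with the correctly-handed inner identity. You have also put your finger on the right crux --- but the step you budgeted all your care for cannot be carried out: the unconjugated identities $\mu(S \tens 1)\delta = \eta\epsilon$ and $\mu(1 \tens S)\delta = \eta\epsilon$ do \emph{not} follow from the hypotheses, and no application of the semibialgebra axiom to $\alpha$, $\beta$ and their inverses telescopes the conjugation away. Concretely, take $A = kG$ for a noncommutative group $G$ with its usual Hopf structure (coassociativity is permitted, since $\delta$ is only ``not known'' to be coassociative), fix a non-central $h \in G$, and set $S'(a) = hS(a)h^{-1}$, $\alpha = h$, $\beta = h^{-1}$. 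Both Drinfel'd axioms hold: on a group-like $g$ one has $S'(g)\alpha g = hg^{-1}h^{-1}\cdot h \cdot g = h = \epsilon(g)\alpha$ and $g\beta S'(g) = gh^{-1}\cdot hg^{-1}h^{-1} = h^{-1} = \epsilon(g)\beta$. Yet $\mu(S' \tens 1)\delta(g) = hg^{-1}h^{-1}g \neq \epsilon(g)1$ whenever $g$ and $h$ fail to commute, so your intermediate target $S \star 1 = \eta\epsilon$ is genuinely false, not merely hard to reach.

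Worse, in this model the conclusion of the proposition itself fails under the literal reading: $\mu_3(1 \tens S' \tens 1)(\delta \tens 1)\delta(g) = g\,hg^{-1}h^{-1}\,g \neq g$ (for $G = S_3$, $g = (12)$, $h = (13)$ this evaluates to $(13)$). What the axioms deliver is exactly what you extracted and then hoped to improve: setting $T(a) = \alpha^{-1}S(a)\alpha$ and $R(a) = \beta S(a)\beta^{-1}$, one gets $T \star 1 = \eta\epsilon = 1 \star R$ directly from the two axioms and invertibility, whence $1 \star (T \star 1) = 1$ and $(1 \star R)\star 1 = 1$; unpacked, these are the stated identities with $\alpha^{-1}(-)\alpha$, respectively $\beta(-)\beta^{-1}$, inserted around $S$. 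The literal statement is recovered only when $\alpha$ and $\beta$ are central (in particular $\alpha = \beta = \eta$, the normalized case), since then they cancel across the middle of the convolution. Note also that the paper states this proposition bare, with no proof to compare against; the only defensible reading is one in which $S$ is already normalized in this sense, and under the literal hypotheses your plan stalls exactly where you predicted it might --- and the stall is essential, not technical.
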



Note that the two standard Drinfel'd conditions were still satisfied in the definition of a weak quasi-Hopf algebra (in the sense of Haring-Oldenburg et
al. \cite{HO}).

We note also that any VN-core $A$ in $\Vect_k$ can be completed to a (VN-) bialgebra $A \oplus k$ in a fairly obvious way.

Moreover, there are Tannaka-type reconstruction results for VN-bialgebras, based on the notion of a partially compact monoidal category,
which is simply a $k$-linear monoidal category $(\mathcal A,\tens,I)$ equipped with an antipode functor $S$ and two $k$-linear natural
transformations:

\[ \mathcal A(A \tens B, C) \oto{n} \mathcal A(A,C \tens SB) \oto{e} \mathcal A(A \tens B, C) \]

\noindent such that $ene = e$ and $nen = n$. The reconstruction results use the facts that the finite-dimensional left $k$-representations of a VN-bialgebra
$(A,\mu,\delta,\eta,\epsilon,S)$ satisfying $S(xy)= SySx$ and $S1 = 1$ form such a partially compact monoidal category, and, in general, the
self-dual representations of any VN-bialgebra form a partially compact monoidal category.

Enquires (etc.) regarding this article can be made to the author through Micah McCurdy (Macquarie University), who kindly typed the manuscript.


\begin{thebibliography}{9}

\bibitem{DP} B.J. Day and C.A. Pastro., ``On endomorphism algebras of separable monoidal functors'', TAC 22 (2009), 77-96.
\bibitem{HO} R. H\:aring-Oldenburg, ``Reconstruction of weak quasi-Hopf algebras'', J. Alg. 194 (1997) 14-35.
\bibitem{McCurdy} M. McCurdy, ``Fusion maps and some general calculations'', Note (Math. Dept., Macquarie University) March 2009.
\bibitem{PS} C.A. Pastro and R.H. Street, ``Weak Hopf monoids in braided monoidal categories'' Alg. and No. Theory 3 (2009), 149-207.
\bibitem{Raeburn} I. Raeburn, ``Graph Algebras'', \emph{CBMS Regional Conf. Series in Math.} (AMS), (2005), 103.
\bibitem{Street} R.H. Street, ``Fusion operators and cocycloids in monoidal categories'', \emph{Appl. Cat. Structures} 6, (1998), 177-191.
%
\end{thebibliography}
\end{document}